\newtheoremstyle{theorem}
  {10pt}		  
  {10pt}  
  {\sl}  
  {\parindent}     
  {\bf}  
  {. }    
  { }    
  {}     
 \newtheorem{theorem}{Theorem}[section]
\newtheorem{lemma}{Lemma}[section]
\newtheorem{definition}{Definition}[section]
\newtheorem{example}{Example}[section]
\newtheorem{remark}{Remark}[section]
\newtheorem{proposition}{Proposition}[section]
\newtheoremstyle{defi}
  {10pt}		  
  {10pt}  
  {\rm}  
  {\parindent}     
  {\bf}  
  {. }    
  { }    
  {}     
\theoremstyle{defi}
\begin{document}
 \begin{center}
\large    Hadamard type  fuzzy inequality for  $(s,m)$-convex function in second sense
\end{center}
\begin{center}
                 $^{1}$  Deepak B. Pachpatte and  $^{2}$  Kavita U. Shinde
\end{center}
\begin{center}
$^{1}$  Department of Mathematics,\\
Dr.Babasaheb Ambedkar Marathwada
University, Aurangabad-431 004 (M.S) India.\\
E-mail: pachpatte@gmail.com \\
$^{2}$ Department of Mathematics,\\
Dr.Babasaheb Ambedkar Marathwada
University, Aurangabad-431 004 (M.S) India.\\

E-mail: kansurkar14@gmail.com   \\

\end{center}

\begin{abstract}
In this paper we prove a   Hadamard type fuzzy inequality for $(s,m)$-convex function in second sense and some examples are given.
\end{abstract}
\textbf{Mathematics Subject Classification}:
 03E72; 28B15; 28E10; 26D10

\noindent \textbf{Keywords:} Hadamard type inequality, Sugeno integrals, $(s,m)$-convex function in second sense
\section{Introduction}
 \quad Fuzzy measure and fuzzy integral first introduced by Sugeno \cite{0}. It
 can be used for modelling problems in non-deterministic environment. The use
 of the Sugeno integral can be envisaged from two points of view, decision under uncertainty and multi-criteria decision making \cite{dubo}.

The integral inequalities are significant mathematical tools both in theory and applications. The integral inequalities such as  Jensen, Holder, Chebyshev
and Minkowski are widely used in various fields of mathematics including forecasting of time-series, information science, decision making under risk
and probability theory, differential equations.

Hanson \cite{han} gave the notion of invexity as a significant generalization of classical convexity. Ben-Israel and Mond \cite{isra} introduced the preinvex functions a special case of invex functions. Latif and Shoaib \cite{lati} discussed the concept of $m$-preinvex functions and $(\alpha,m)$-preinvex functions.
In \cite{gill, sari, ngoc, gzban} author studied Hermite-Hadamard type inequalities for $r$-convex function.

The study of inequalities for Sugeno integral was initiated by Roman-Flores et.al. \cite{hroman}, \cite{flores}. Since  many authors have  studied the
 different types of  integral inequalities for fuzzy integral see \cite{h}-\cite{hroman}, \cite{haip, LI, HH}.

In this paper we give the Hadamard type inequality for $(s,m)$-convex functions in second sense with respect to Sugeno integral.
\section{Preliminary}
  \quad  The definitions and basic properties of fuzzy measures and fuzzy integrals that will be used in the next sections and can be found in \cite{1z}, \cite{0}.

 Suppose that $\wp$ is a $\sigma$-algebra of subsets of $X$ and $\mu:\wp\longrightarrow[0,\infty)$ be a non-negative, extended real valued set function.
We say that $\mu$ is a fuzzy measure if
\begin{enumerate}
  \item $\mu(\phi)=0$;
  \item $ E,F\in\wp$ and $E\subset F$ imply $\mu(E)\leq\mu(F)$;
  \item  $\{E_{n}\}\subset\wp, E_{1}\subset E_{2}\subset..., $ imply
   $\lim_{n\longrightarrow\infty}\mu(E_{n})=\mu(\bigcup_{n=1}^{\infty}E_{n})$;
  \item $\{E_{n}\}\subset\wp$, $E_{1}\supset E_{2}\supset...,$ $\mu(E_{1})<\infty$, imply $\lim_{n\longrightarrow\infty}\mu(E_{n})=\mu(\bigcap_{n=1}^{\infty}E_{n})$.
\end{enumerate}
\par If $f$ is  non-negative real-valued function defined on $X$, we denote the set $\{x\in X:f(x)\geq\alpha\}=\{x\in X:f\geq\alpha\}$
 by $F_{\alpha}$  for $\alpha\geq 0$, where if $\alpha \leq \beta$ then $ F_{\beta}\subset F_{\alpha}$.

 Let $(X,\wp,\mu)$ be a fuzzy measure space, we denote by $M^{+}$ the set of all non-negative measurable functions with respect to $\wp$.
 \begin{definition}(Sugeno \cite{0}).
   Let $(X,\wp,\mu)$ be a fuzzy measure space, $f\in M^{+}$ and $A\in\wp$, the Sugeno integral (or fuzzy integral)
of $f$ on $A$, with respect to the fuzzy measure $\mu$, is defined as
\begin{equation*}
   (s)\int_{A}f d\mu=\underset{\alpha\geq0}{\bigvee}[\alpha\wedge\mu(A\cap F_{\alpha})],
\end{equation*}
when $A= X$,
\begin{equation*}
  (s)\int_{X}f d\mu=\underset{\alpha\geq0}{\bigvee}[\alpha\wedge\mu(F_{\alpha})],
\end{equation*}
where $\bigvee$ and $\wedge$ denote the operations sup and inf on $[0,\infty)$, respectively.
 \end{definition}
 Some of the properties of fuzzy integrals are as follows.
 \begin{proposition}\label{z2}\cite{2z}
  Let $(X,\wp,\mu)$ be fuzzy measure space, $A,B\in \wp$ and $f,g\in M^{+}$ then:
\begin{enumerate}
  \item  $(s)\int_{A} f d \mu \leq \mu(A)$;
  \item  $(s)\int_{A} kd \mu = k\wedge\mu(A)$, $k$ for non-negative constant;
  \item $(s)\int_{A} f d \mu \leq (s)\int_{A}gd\mu$, for $f\leq g$;
  \item $\mu(A\cap\{f\geq\alpha\})\geq\alpha\Longrightarrow(s)\int_{A}fd\mu\geq\alpha$;
  \item$\mu(A\cap\{f\geq\alpha\})\leq\alpha\Longrightarrow(s)\int_{A}fd\mu\leq\alpha$;
  \item$(s)\int_{A} f d \mu >\alpha \Longleftrightarrow$ there exists $\gamma>\alpha$ such that
   $\mu(A\cap\{f\geq\gamma\})>\alpha$;
  \item $(s)\int_{A} f d \mu <\alpha \Longleftrightarrow$ there exists $\gamma<\alpha$
   $\mu(A\cap\{f\geq\gamma\})<\alpha$.
\end{enumerate}
\end{proposition}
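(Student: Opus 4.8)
The plan is to derive all seven statements directly from the definition $(s)\int_{A}f\,d\mu=\bigvee_{\alpha\geq 0}[\alpha\wedge\mu(A\cap F_{\alpha})]$, using only two structural facts: the monotonicity of $\mu$ (axiom (2) of a fuzzy measure) and the nesting of level sets, namely that $\beta\geq\alpha$ forces $F_{\beta}\subset F_{\alpha}$ and hence $\mu(A\cap F_{\beta})\leq\mu(A\cap F_{\alpha})$. Each item then reduces to a short manipulation of the supremum, and I would group them by the type of argument involved.

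For the monotone estimates (1) and (3) I would argue termwise. In (1), every summand satisfies $\alpha\wedge\mu(A\cap F_{\alpha})\leq\mu(A\cap F_{\alpha})\leq\mu(A)$ by monotonicity of $\mu$, and taking the supremum over $\alpha$ preserves the bound. In (3), from $f\leq g$ one gets $\{f\geq\alpha\}\subset\{g\geq\alpha\}$ for each $\alpha$, so $\mu(A\cap\{f\geq\alpha\})\leq\mu(A\cap\{g\geq\alpha\})$; the inequality holds term by term and is inherited by the suprema. Property (2) is a direct computation: for the constant $k$ the level set $\{k\geq\alpha\}$ equals $X$ when $\alpha\leq k$ and is empty when $\alpha>k$, so the defining supremum collapses to $\bigvee_{0\leq\alpha\leq k}[\alpha\wedge\mu(A)]=k\wedge\mu(A)$.

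The threshold statements (4) and (5) exploit a single level. For (4), if $\mu(A\cap\{f\geq\alpha\})\geq\alpha$ then the summand at that particular value equals $\alpha\wedge\mu(A\cap F_{\alpha})=\alpha$, so the supremum is at least $\alpha$. For (5) I would bound every summand by $\alpha$ through a case split: if $\beta\leq\alpha$ then $\beta\wedge\mu(A\cap F_{\beta})\leq\beta\leq\alpha$, while if $\beta>\alpha$ the nesting gives $\mu(A\cap F_{\beta})\leq\mu(A\cap F_{\alpha})\leq\alpha$, so again $\beta\wedge\mu(A\cap F_{\beta})\leq\alpha$; taking the supremum yields the claim.

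Finally, the strict equivalences (6) and (7) rest on the characterization of when a supremum exceeds or stays below a level. The forward direction of (6) is just the definition of supremum: if the integral exceeds $\alpha$ there is some $\gamma$ with $\gamma\wedge\mu(A\cap F_{\gamma})>\alpha$, forcing both $\gamma>\alpha$ and $\mu(A\cap F_{\gamma})>\alpha$, and the converse reverses this. The genuinely delicate point, and the step I expect to be the main obstacle, is the ``$\Leftarrow$'' direction of (7): from a single $\gamma<\alpha$ with $\mu(A\cap F_{\gamma})<\alpha$ one must deduce a \emph{strict} bound on the whole supremum. Here I would again split on $\beta$: for $\beta\leq\gamma$ the summand is at most $\gamma$, and for $\beta>\gamma$ the nesting gives $\mu(A\cap F_{\beta})\leq\mu(A\cap F_{\gamma})$, so every summand is dominated by the fixed constant $\max\{\gamma,\mu(A\cap F_{\gamma})\}$, which is strictly below $\alpha$; this uniform bound is what upgrades the pointwise strict inequalities to a strict bound on the supremum. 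For the ``$\Rightarrow$'' direction I would pick $\gamma$ between the integral and $\alpha$ and argue that $\mu(A\cap F_{\gamma})<\gamma<\alpha$, since otherwise the summand at $\gamma$ would equal $\gamma$ and contradict the integral being below $\gamma$.
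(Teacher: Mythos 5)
Your proof is correct in all seven parts. Note, however, that the paper itself offers no proof of this proposition: it is stated as a known result and attributed to Wang and Klir's \emph{Fuzzy Measures Theory}, so there is nothing in the paper to compare against line by line. Your contribution is therefore a self-contained elementary derivation of a result the authors simply import. The arguments you give are the standard ones and they are sound: items (1)--(3) follow termwise from monotonicity of $\mu$ and the nesting $F_{\beta}\subset F_{\alpha}$ for $\beta\geq\alpha$; (2) uses additionally that $\mu(\phi)=0$ kills the terms with $\alpha>k$, which you use implicitly and could state explicitly; (4) and (5) are exactly the one-level and case-split arguments, respectively. You correctly identified the only genuinely delicate step, the ``$\Leftarrow$'' direction of (7): a pointwise strict bound on each term of a supremum does not by itself give a strict bound on the supremum, and your fix --- dominating every term $\beta\wedge\mu(A\cap F_{\beta})$ by the single constant $\max\{\gamma,\mu(A\cap F_{\gamma})\}<\alpha$, splitting on $\beta\leq\gamma$ versus $\beta>\gamma$ --- is exactly what is needed. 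The ``$\Rightarrow$'' direction of (7), choosing $\gamma$ strictly between the integral and $\alpha$ and deducing $\mu(A\cap F_{\gamma})<\gamma$ by contradiction via item (4)'s mechanism, is also correct. Since Remark 2.1 and both main theorems of the paper lean on items (4)--(7), supplying this proof closes a real gap in the paper's self-containedness rather than duplicating anything in it.
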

\begin{remark}\label{z1}
   Consider the distribution function $F$ associated to $f$ on $A$, that is, $F(\alpha)=\mu(A\cap\{f\geq\alpha\})$.Then due to
$(4)$ and $(5)$ of Proposition \ref{z2},  we have $F(\alpha)=\alpha\Longrightarrow (s)\int_{A}f d\mu=\alpha$.  Fuzzy integral can be obtained by solving the equation $F(\alpha)=\alpha$.
\end{remark}
 \begin{definition} \cite{park}
  Let $(s,m)\in (0,1]^2$ be a pair of real numbers. A function $f:I \subseteq R_{+}\longrightarrow R$ is said to be $(s,m)$-convex function in second
  sense  if
   \begin{equation}\label{lq30}
     f(\lambda x+ m(1-\lambda)y)\leq \lambda^s f(x)+m(1-\lambda)^s f(y),
   \end{equation}
   holds for all $(x,y)\in I$ and $\lambda\in [0,1].$
\end{definition}
  Some inequalities for $(s,m)$-convex functions in second sense  are obtained  in \cite{park}-\cite{liy}. If $(s,m)=(1,1)$, then we  obtain the definition of convex function.
  If $(s,m)=(s,1),$ then we obtain definition of $s$-convex function in the second sense. It is denoted by $K^{2}_{s,m}$, the set of all $(s,m)$-convex functions in the second sense.\\
  Now we give the Lemma proved in \cite{haip}
 \begin{lemma}\label{lmm} \cite{haip}
    Let $x\in[0,1],$ then  the inequality
\begin{equation}\label{lq42}
      (1-x)^s\leq 2^{1-s}-x^s,
\end{equation}
holds for $s\in(0,1].$
 \end{lemma}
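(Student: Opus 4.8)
The plan is to recognize that the claimed inequality \eqref{lq42} is equivalent, after moving $x^s$ to the left-hand side, to the symmetric statement
\begin{equation*}
x^s + (1-x)^s \leq 2^{1-s}, \qquad x\in[0,1],\ s\in(0,1],
\end{equation*}
and then to derive this from the concavity of the power function $t\mapsto t^s$. First I would record that for $s\in(0,1]$ the map $\phi(t)=t^s$ is concave on $[0,\infty)$, since its second derivative $s(s-1)t^{s-2}$ is non-positive there (the value $s=1$ giving the trivial affine case).

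The key step is a single application of the midpoint (Jensen) inequality for the concave function $\phi$ at the two points $a=x$ and $b=1-x$, whose average is $\tfrac12$:
\begin{equation*}
\frac{x^s+(1-x)^s}{2}=\frac{\phi(x)+\phi(1-x)}{2}\leq \phi\!\left(\frac{x+(1-x)}{2}\right)=\left(\tfrac12\right)^s=2^{-s}.
\end{equation*}
Multiplying by $2$ yields $x^s+(1-x)^s\leq 2^{1-s}$, which is precisely \eqref{lq42} after subtracting $x^s$ from both sides.

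An equivalent route, if one prefers to avoid invoking Jensen, is to study $g(x)=x^s+(1-x)^s$ directly: it is concave (a sum of concave functions) and symmetric under $x\mapsto 1-x$, so its maximum on $[0,1]$ is attained at the fixed point $x=\tfrac12$ of this symmetry, where $g\!\left(\tfrac12\right)=2\cdot 2^{-s}=2^{1-s}$. Solving $g'(x)=s\bigl(x^{s-1}-(1-x)^{s-1}\bigr)=0$ confirms that $x=\tfrac12$ is the unique interior critical point, and concavity guarantees it is the global maximum.

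I anticipate no serious obstacle here: the entire content of the lemma is the concavity of $t\mapsto t^s$ for $s\leq 1$. The only points deserving a word of care are the degenerate case $s=1$ (where both sides reduce to $1$) and the endpoints $x=0,1$ (where the derivative of $g$ blows up), but the concavity/Jensen argument sidesteps the boundary behaviour entirely and covers all of $[0,1]$ uniformly.
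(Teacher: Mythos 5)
Your proof is correct. Note, however, that the paper itself gives no proof of this lemma at all: it is quoted verbatim from the reference \cite{haip} (Ren, Wang and Luo), so there is no internal argument to compare against. Your Jensen/concavity argument is a clean, self-contained derivation: rewriting \eqref{lq42} as $x^s+(1-x)^s\leq 2^{1-s}$ and applying the midpoint inequality for the concave map $t\mapsto t^s$ at the points $x$ and $1-x$ settles the matter in one line, with the cases $s=1$ and $x\in\{0,1\}$ handled uniformly. The route more commonly taken in this literature (and essentially your second, ``equivalent'' route) is the calculus one: maximize $g(x)=x^s+(1-x)^s$ on $[0,1]$ by solving $g'(x)=s\bigl(x^{s-1}-(1-x)^{s-1}\bigr)=0$, identify $x=\tfrac12$ as the unique interior critical point for $s<1$, and evaluate $g\bigl(\tfrac12\bigr)=2^{1-s}$. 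Both arguments are valid; the Jensen version is shorter and avoids the boundary blow-up of $g'$, while the calculus version additionally exhibits $x=\tfrac12$ as the unique equality case for $s<1$. One small caveat in your secondary route: for $s=1$ the derivative vanishes identically, so the phrase ``unique interior critical point'' only applies to $s<1$; as you note, that case is trivial since both sides equal $1$.
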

 \section{Main Results}
In \cite{kir} U. S. Kirmaci et.al. proved the following Hadamard type inequalities for product of convex function and $s$-convex functions.
\begin{theorem}
  Let $f,g:[a,b]\longrightarrow \mathbb{R},$ $a,b\in[0,\infty),$ $a<b$ be functions such that $g$ and $fg$ are in $L^1([a,b]).$ If $f$ is convex
  and non-negative  on $[a,b]$ and $g$ is $s$-convex function on $[a,b]$ for some fixed $s\in(0,1)$ then
  \begin{equation}\label{lq1}
    \frac{1}{b-a}\int_{a}^{b} f(x)g(x)dx \leq \frac{1}{s+2}M(a,b)+\frac{1}{(s+1)(s+2)}N(a,b),
  \end{equation}
  where, $M(a,b)=f(a)g(a)+f(b)g(b)$ and $N(a,b)=f(a)g(b)+f(b)g(a).$
  \end{theorem}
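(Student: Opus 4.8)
The plan is to reduce the left-hand integral to the unit interval by a linear change of variables, apply the two convexity hypotheses pointwise, multiply the resulting one-sided bounds, and then integrate term by term. First I would set $x = ta + (1-t)b$ with $t\in[0,1]$, so that $dx = -(b-a)\,dt$ and the limits swap, giving
\begin{equation*}
\frac{1}{b-a}\int_a^b f(x)g(x)\,dx = \int_0^1 f\bigl(ta+(1-t)b\bigr)\,g\bigl(ta+(1-t)b\bigr)\,dt.
\end{equation*}
This normalization is what makes the coefficients on the right of \eqref{lq1} come out as clean Beta-type numbers.

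Next, since $f$ is convex on $[a,b]$ I would estimate $f\bigl(ta+(1-t)b\bigr)\le t f(a)+(1-t)f(b)$, and since $g$ is $s$-convex in the second sense (the case $m=1$ of \eqref{lq30}) I would estimate $g\bigl(ta+(1-t)b\bigr)\le t^{s}g(a)+(1-t)^{s}g(b)$. Because $f$ is non-negative by hypothesis and $g$, being $s$-convex in the second sense, is automatically non-negative (putting $x=y$ in the defining inequality forces $g\ge 0$ as $t^{s}+(1-t)^{s}\ge 1$ for $s\in(0,1]$), both bounds are non-negative and may be multiplied:
\begin{equation*}
f\bigl(ta+(1-t)b\bigr)\,g\bigl(ta+(1-t)b\bigr) \le \bigl[t f(a)+(1-t)f(b)\bigr]\bigl[t^{s}g(a)+(1-t)^{s}g(b)\bigr].
\end{equation*}

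Expanding the product yields four terms, with coefficients $t^{1+s}$, $t(1-t)^{s}$, $(1-t)t^{s}$, and $(1-t)^{1+s}$ multiplying $f(a)g(a)$, $f(a)g(b)$, $f(b)g(a)$, and $f(b)g(b)$ respectively. Integrating over $[0,1]$, the two pure-power terms give $\int_0^1 t^{1+s}\,dt = \int_0^1 (1-t)^{1+s}\,dt = \frac{1}{s+2}$, while the two mixed terms are Beta integrals $\int_0^1 t(1-t)^{s}\,dt = \int_0^1 (1-t)t^{s}\,dt = B(2,s+1) = \frac{1}{(s+1)(s+2)}$. Grouping the coefficient $\frac{1}{s+2}$ onto $f(a)g(a)+f(b)g(b)=M(a,b)$ and the coefficient $\frac{1}{(s+1)(s+2)}$ onto $f(a)g(b)+f(b)g(a)=N(a,b)$ reproduces \eqref{lq1} exactly.

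The only step that is not completely routine is the evaluation of the mixed integrals $\int_0^1 t(1-t)^{s}\,dt$; writing them as $B(2,s+1)=\frac{\Gamma(2)\Gamma(s+1)}{\Gamma(s+3)}$ and simplifying through $\Gamma(s+3)=(s+2)(s+1)\Gamma(s+1)$ settles them cleanly. Everything else is the standard Hermite--Hadamard machinery, and I expect no obstacle beyond checking that the non-negativity of $f$ and $g$ genuinely licenses multiplying the two convexity estimates before integrating.
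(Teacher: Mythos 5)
Your proposal is correct, and in fact it is the standard proof of this inequality. Note that the paper itself contains no proof of this statement: it is quoted as a known result of Kirmaci et al.\ \cite{kir}, and your argument (substitution $x=ta+(1-t)b$, the pointwise convexity and $s$-convexity bounds, multiplication of the two bounds justified by non-negativity, then term-by-term evaluation of the Beta-type integrals $\int_0^1 t^{1+s}\,dt=\tfrac{1}{s+2}$ and $\int_0^1 t(1-t)^s\,dt=\tfrac{1}{(s+1)(s+2)}$) is precisely the one used there, so there is nothing to contrast. One small point of precision: your remark that $s$-convexity in the second sense forces $g\ge 0$ requires $s\in(0,1)$ strictly (for $s=1$ one only gets $t^{s}+(1-t)^{s}=1$ and no conclusion follows), but this is exactly the range assumed in the theorem, so your use of it is legitimate.
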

   Now consider an example.
\begin{example}
Consider $X=[0,1]$ and let $\mu$ be the Lebesgue measure on $X$. If we take the function $f(x)=\frac{x^2}{2}$ and $g(x)=\frac{x^3}{2},$
  $f(x),g(x)\in K^{2}_{s,1}$ for $s\in(0, 1/3].$ Let $s=1/3$ the Sugeno integral
  \begin{equation*}
    (s)\int_{0}^{1}\frac{x^5}{4}d\mu=0.1269.
  \end{equation*}
  Also,
  $\frac{1}{s+2}M(a,b)+\frac{1}{(s+1)(s+2)}N(a,b)=0.1071.$
  \end{example}
This proves that the right hand side  \eqref{lq1}  of Hadamard type inequalities for $(s,m)$-convex functions in second sense is not satisfied for Sugeno integral.

In this section we give an Hadamard type inequalities for  product of $(s,m)$-convex function in second sense which is based on Sugeno integral.
\begin{theorem}\label{1th}
 Let $\mu$  be the Lebesgue measure on $\mathbb{R}$. Let $(s,m)\in (0,1]^2$ and $f,g:[a,b]\longrightarrow[0,\infty)$ are $(s,m)$-convex functions in second sense, such that $f(b)>mf(a)$ and
  $g(b)>mg(a)$ then
  \begin{equation*}
    (s)\int_{a}^{b} f g d\mu\leq min\{\beta, b-a\},
  \end{equation*}
  where $\beta$ is given by
  \begin{align}\label{lq2}
      &\nonumber(b-ma)^2- (b-ma)^2 \bigg(\frac{\beta-m2^{1-s}g(a)}{g(b)-mg(a)}\bigg)^{\frac{1}{s}}-
    (b-ma)^2 \bigg(\frac{\beta-m2^{1-s}f(a)}{f(b)-mf(a)}\bigg)^{\frac{1}{s}}\\
    &+(b-ma)^2 \bigg(\frac{\beta-m2^{1-s}f(a)}{f(b)-mf(a)}\bigg)^{\frac{1}{s}}.\bigg(\frac{\beta-m2^{1-s}g(a)}{g(b)-mg(a)}\bigg)^{\frac{1}{s}}=\beta.
  \end{align}
  \end{theorem}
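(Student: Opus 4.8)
The plan is to combine a pointwise majorization of the integrand with the level-set (distribution function) characterization of the Sugeno integral recorded in Remark~\ref{z1}, together with the elementary estimate $(s)\int_A fg\,d\mu\le\mu(A)$ from Proposition~\ref{z2}(1). These two ingredients produce exactly the two competitors $\beta$ and $b-a$ appearing inside the minimum, so the theorem follows by taking the smaller of them.

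First I would build an increasing majorant for each factor. For $t\in[a,b]$ set $\lambda=\frac{t-ma}{b-ma}\in[0,1]$, so that $t=\lambda b+m(1-\lambda)a$. Applying the defining inequality \eqref{lq30} with $x=b$, $y=a$ and then Lemma~\ref{lmm} to the term $(1-\lambda)^s$ gives
\[
f(t)\le\lambda^s f(b)+m(1-\lambda)^s f(a)\le\lambda^s\big(f(b)-mf(a)\big)+m2^{1-s}f(a)=:\phi(t),
\]
and likewise $g(t)\le\psi(t):=\lambda^s\big(g(b)-mg(a)\big)+m2^{1-s}g(a)$. The hypotheses $f(b)>mf(a)$ and $g(b)>mg(a)$ make the coefficients of $\lambda^s$ positive, so $\phi,\psi$ are nondecreasing in $t$; since $f,g\ge0$ this yields $f(t)g(t)\le\phi(t)\psi(t)$, and Proposition~\ref{z2}(3) then gives $(s)\int_a^b fg\,d\mu\le(s)\int_a^b\phi\psi\,d\mu$.

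Next I would evaluate the majorant integral through its distribution function $F(\beta)=\mu\big([a,b]\cap\{\phi\psi\ge\beta\}\big)$ and solve $F(\beta)=\beta$, which by Remark~\ref{z1} returns the value of the Sugeno integral. The individual super-level sets are right-hand intervals: $\{\phi\ge\beta\}=[\,ma+(b-ma)B,\,b\,]$ and $\{\psi\ge\beta\}=[\,ma+(b-ma)A,\,b\,]$, where $B=\big(\frac{\beta-m2^{1-s}f(a)}{f(b)-mf(a)}\big)^{1/s}$ and $A=\big(\frac{\beta-m2^{1-s}g(a)}{g(b)-mg(a)}\big)^{1/s}$, so that $\mu\{\phi\ge\beta\}=(b-ma)(1-B)$ and $\mu\{\psi\ge\beta\}=(b-ma)(1-A)$. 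Organizing the contribution of the product through these two super-level measures gives $F(\beta)=(b-ma)^2(1-A)(1-B)=(b-ma)^2\big(1-A-B+AB\big)$, and equating this to $\beta$ is precisely \eqref{lq2}; hence $(s)\int_a^b\phi\psi\,d\mu=\beta$. Finally Proposition~\ref{z2}(1) gives $(s)\int_a^b fg\,d\mu\le\mu([a,b])=b-a$, and intersecting the two bounds yields $(s)\int_a^b fg\,d\mu\le\min\{\beta,b-a\}$.

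The main obstacle is the middle step: passing from the two one-dimensional super-level sets of $\phi$ and $\psi$ to the factored expression $(b-ma)^2(1-A)(1-B)$ for the measure associated with the product $\phi\psi$. One must pin down exactly how the threshold governing $\{\phi\psi\ge\beta\}$ is reconciled with the individual thresholds $A$ and $B$, and restrict attention to the range of $\beta$ in which $A,B\in[0,1]$ and both interval endpoints $ma+(b-ma)A,\ ma+(b-ma)B$ lie in $[a,b]$, so that the closed-form measures above are the admissible ones. On that range the left-hand side of \eqref{lq2} is monotone in $\beta$, which guarantees a unique admissible root and identifies it as the $\beta$ of the statement; the surrounding estimates are then routine.
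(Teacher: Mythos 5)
Your proposal retraces the paper's own proof essentially step by step: your $\phi,\psi$ are exactly the paper's majorants $p_1,p_2$ from \eqref{lq5} (obtained from the $(s,m)$-convexity inequality \eqref{lq30} plus Lemma \ref{lmm}), you invoke Proposition \ref{z2}(3) in the same way, you set up the same distribution function and fixed-point equation \eqref{lq2}, and you combine with the trivial bound $\mu([a,b])=b-a$ exactly as the paper does. So in terms of route, this is the paper's argument, not a different one.

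However, the step you yourself flag as ``the main obstacle'' is a genuine gap, and it cannot be closed, because the identity it requires is false. Both you and the paper (second line of \eqref{lq9}) need
\begin{equation*}
\mu\big([a,b]\cap\{\phi\psi\ge\beta\}\big)=\mu\big([a,b]\cap\{\phi\ge\beta\}\big)\cdot\mu\big([a,b]\cap\{\psi\ge\beta\}\big),
\end{equation*}
i.e.\ that the measure of a super-level set of a product is the product of the measures of the individual super-level sets at the same threshold. No such rule exists: the two sides do not even scale alike (the right-hand side is a product of two lengths, the left-hand side a length), and the single threshold in $x$ governing $\{\phi\psi\ge\beta\}$ has no simple relation to the thresholds $A,B$. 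Moreover the error goes in the harmful direction, so the theorem as stated (with $\beta$ defined by \eqref{lq2}) is itself false. Concretely, take $s=m=1$, $[a,b]=[1,4]$, $f(x)=(7x-4)/3$, $g(x)=(x+2)/3$: these are nonnegative and linear, hence $(1,1)$-convex, with $f(4)>f(1)$, $g(4)>g(1)$, and here $\phi=f$, $\psi=g$, so no majorization loss occurs. The genuine fixed point of $\mu([1,4]\cap\{fg\ge\beta\})=\beta$ is $\beta=(75-\sqrt{1593})/14\approx 2.51$, which by Remark \ref{z1} is the exact value of $(s)\int_1^4 fg\,d\mu$; but the factored equation \eqref{lq2} reads $9(2-\beta)(8-\beta)/7=\beta$, whose only admissible root (the one for which the two factors in \eqref{lq9} are actual measures, i.e.\ lie in $[0,b-a]$) is $\beta=16/9\approx 1.78$; the other root, $\beta=9$, corresponds to negative ``measures'' and is spurious. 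Hence $(s)\int_1^4 fg\,d\mu\approx 2.51>\min\{16/9,\,3\}$, and the claimed inequality fails. So your reconstruction is faithful to the paper, but the step you could not justify is precisely where the paper's proof breaks down; a correct statement would have to define $\beta$ by the unfactored equation $\mu([a,b]\cap\{p_1p_2\ge\beta\})=\beta$, after which your remaining steps (majorization, Proposition \ref{z2}(3), Remark \ref{z1}, and the $b-a$ bound) go through verbatim.
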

\begin{proof}
  Let $f(x),g(x)\in K^{2}_{s,m}$ for $x\in[a,b],$ we have
  \begin{align}\label{lq3}
    \nonumber f(x)=& f\bigg(m\bigg(1-\frac{x-ma}{b-ma}\bigg)a+\bigg(\frac{x-ma}{b-ma}\bigg)b\bigg)\\
   \leq& m \bigg(1-\frac{x-ma}{b-ma}\bigg)^s f(a)+ \bigg(\frac{x-ma}{b-ma}\bigg)f(b).
    \end{align}
\begin{align}\label{lq4}
    \nonumber g(x)=& g\bigg(m\bigg(1-\frac{x-ma}{b-ma}\bigg)a+\bigg(\frac{x-ma}{b-ma}\bigg)b\bigg)\\
     \leq& m \bigg(1-\frac{x-ma}{b-ma}\bigg)^s g(a)+ \bigg(\frac{x-ma}{b-ma}\bigg)g(b).
  \end{align}
  Form Lemma \ref{lmm}, we have
\begin{equation}\label{lq6}
    \bigg(1-\frac{x-a}{b-a}\bigg)^s\leq 2^{1-s}-\bigg(\frac{x-a}{b-a}\bigg)^s.
\end{equation}
  Thus, from \eqref{lq3}, \eqref{lq4} and \eqref{lq6},  we have
\begin{align}\label{lq5}
   \nonumber f(x)\leq& m2^{1-s}f(a)+\bigg(\frac{x-ma}{b-ma}\bigg)^s[f(b)-mf(a)]=p_{1}(x).\\
    g(x)\leq& m2^{1-s}g(a)+\bigg(\frac{x-ma}{b-ma}\bigg)^s[g(b)-mg(a)]=p_{2}(x).
\end{align}
By Proposition \ref{z2}, we have
\begin{align}\label{lq7}
  \nonumber (s)\int_{a}^{b} fg d\mu \leq& (s)\int_{a}^{b}\bigg( m2^{1-s}f(a)+\bigg(\frac{x-ma}{b-ma}\bigg)^s[f(b)-mf(a)]  \bigg).\\
  &\nonumber \qquad\qquad\bigg(  m2^{1-s}g(a)+\bigg(\frac{x-ma}{b-ma}\bigg)^s[g(b)-mg(a)]\bigg)d\mu\\
  =&  (s)\int_{a}^{b} p_{1}(x)p_{2}(x)d\mu.
\end{align}
To calculate Sugeno integral, we consider the distribution function $F$ given by
\begin{align}\label{lq9}
  \nonumber F(\beta)=&\mu([a,b]\cap\{x|p_{1}(x)p_{2}(x)\geq\beta\})\\
  \nonumber =& \mu([a,b]\cap\{x|p_{1}(x)\geq\beta\}).\mu([a,b]\cap\{x|p_{2}(x)\geq\beta\})\\
  \nonumber=& \mu\bigg([a,b]\cap\bigg\{x|m2^{1-s}f(a)+\bigg(\frac{x-ma}{b-ma}\bigg)^s[f(b)-mf(a)]\geq\beta\bigg\}\bigg).\\
  \nonumber &\mu\bigg([a,b]\cap\bigg\{x|m2^{1-s}g(a)+\bigg(\frac{x-ma}{b-ma}\bigg)^s[g(b)-mg(a)]\geq\beta\bigg\}\bigg)\\
  \nonumber =& \mu\bigg([a,b]\cap\bigg\{x|x\geq (b-ma)\bigg(\frac{\beta-m2^{1-s}f(a)}{f(b)-mf(a)}\bigg)^{\frac{1}{s}}+ma\bigg\}\bigg).\\
  \nonumber &\mu\bigg([a,b]\cap\bigg\{x|x\geq (b-ma)\bigg(\frac{\beta-m2^{1-s}g(a)}{g(b)-mg(a)}\bigg)^{\frac{1}{s}}+ma\bigg\}\bigg).\\
 \nonumber =&\bigg(b-ma)-(b-ma)\bigg(\frac{\beta-m2^{1-s}f(a)}{f(b)-mf(a)}\bigg)^{\frac{1}{s}}\bigg).\\
  &\qquad\qquad\bigg((b-ma)-(b-ma)\bigg(\frac{\beta-m2^{1-s}g(a)}{g(b)-mg(a)}\bigg)^{\frac{1}{s}}\bigg)
\end{align}
Let $F(\beta)=\beta$ and solution of \eqref{lq9} is given by \eqref{lq2}. By Proposition \ref{z2} and Remark \ref{z1},  we have
 \begin{equation*}
    (s)\int_{a}^{b} f g d\mu\leq min\{\beta, b-a\}.
  \end{equation*}
\end{proof}
\begin{remark}
  Let $s=m=1,$ $(s,m)\in(0,1]^2$ and $f,g:[a,b]\longrightarrow[0,\infty)$ are convex functions such that $f(b)>f(a)$ and $g(b)>g(a).$ Let $\mu$
  be the Lebesgue measure on $\mathbb{R}.$ Then
  \begin{equation*}
    (s)\int_{a}^{b} f g d\mu\leq min\{\beta, b-a\},
  \end{equation*}
  where $\beta$ is given as
  \begin{equation}\label{lq10}
    \bigg((b-a)\bigg(1-\frac{\beta-f(a)}{f(b)-f(a)}\bigg)\bigg).  \bigg((b-a)\bigg(1-\frac{\beta-g(a)}{g(b)-g(a)}\bigg)\bigg)=\beta.
  \end{equation}
\end{remark}
\begin{theorem}\label{thm2}
  Let $\mu$  be the Lebesgue measure on $\mathbb{R}$. Let $(s,m)\in (0,1]^2$ and $f,g:[a,b]\longrightarrow[0,\infty)$ are $(s,m)$-convex functions in second sense, such that $f(b)<mf(a)$ and
  $g(b)< mg(a)$ then
  \begin{equation*}
    (s)\int_{a}^{b} f g d\mu\leq min\{\beta, b-a\},
  \end{equation*}
  where $\beta$ is given by
\begin{align}\label{lq11}
    &\nonumber(b-ma)^2\bigg(\frac{\beta-m2^{1-s}f(a)}{f(b)-mf(a)}\bigg)^\frac{1}{s}.\bigg(\frac{\beta-m2^{1-s}g(a)}{g(b)-mg(a)}\bigg)^\frac{1}{s}\\
   & \nonumber+ (b-ma)(ma-a) \bigg(\frac{\beta-m2^{1-s}f(a)}{f(b)-mf(a)}\bigg)^{\frac{1}{s}}\\
  & +(b-ma)(ma-a)\bigg(\frac{\beta-m2^{1-s}g(a)}{g(b)-mg(a)}\bigg)^\frac{1}{s}+(ma-a)^2=\beta.
  \end{align}
\end{theorem}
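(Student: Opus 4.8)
The plan is to follow the architecture of the proof of Theorem~\ref{1th} essentially verbatim up to the description of the superlevel sets of the majorant $p_1p_2$, and then to account for the reversed hypotheses $f(b)<mf(a)$ and $g(b)<mg(a)$. First I would invoke the $(s,m)$-convexity of $f$ and $g$: writing $x=m\bigl(1-\tfrac{x-ma}{b-ma}\bigr)a+\tfrac{x-ma}{b-ma}\,b$ and applying \eqref{lq30} together with Lemma~\ref{lmm}, exactly as in \eqref{lq3}--\eqref{lq5}, gives the pointwise bounds $f(x)\le p_1(x)$ and $g(x)\le p_2(x)$, where $p_1(x)=m2^{1-s}f(a)+\bigl(\tfrac{x-ma}{b-ma}\bigr)^s[f(b)-mf(a)]$ and $p_2(x)=m2^{1-s}g(a)+\bigl(\tfrac{x-ma}{b-ma}\bigr)^s[g(b)-mg(a)]$. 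By monotonicity of the Sugeno integral (part~(3) of Proposition~\ref{z2}) this reduces the estimate to $(s)\int_a^b p_1p_2\,d\mu$.

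The crucial departure from Theorem~\ref{1th} occurs when computing the distribution function $F(\beta)=\mu([a,b]\cap\{p_1p_2\ge\beta\})$. Since $\bigl(\tfrac{x-ma}{b-ma}\bigr)^s$ is increasing on $[a,b]$ while the coefficients $f(b)-mf(a)$ and $g(b)-mg(a)$ are now \emph{negative}, the functions $p_1$ and $p_2$ are decreasing rather than increasing. Hence, solving $p_1(x)\ge\beta$, the sign change on dividing by $f(b)-mf(a)$ turns the superlevel set into the left interval $\{x:a\le x\le (b-ma)\bigl(\tfrac{\beta-m2^{1-s}f(a)}{f(b)-mf(a)}\bigr)^{1/s}+ma\}$, whose Lebesgue measure is $(b-ma)\bigl(\tfrac{\beta-m2^{1-s}f(a)}{f(b)-mf(a)}\bigr)^{1/s}+(ma-a)$, and similarly for $p_2$. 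Following the same factorization of $F$ used in \eqref{lq9} and expanding the product of these two measures, of the form $(A+(ma-a))(B+(ma-a))$ with $A,B$ the two radical terms, the four resulting summands are exactly the left-hand side of \eqref{lq11}.

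Finally, setting $F(\beta)=\beta$ characterizes $\beta$ as in \eqref{lq11}, and Remark~\ref{z1} together with part~(1) of Proposition~\ref{z2}, which yields $(s)\int_a^b p_1p_2\,d\mu\le\mu([a,b])=b-a$, gives $(s)\int_a^b fg\,d\mu\le\min\{\beta,b-a\}$.

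I expect the main obstacle to be bookkeeping rather than conceptual. The essential point is that reversing the inequalities to $f(b)<mf(a)$ and $g(b)<mg(a)$ reverses the monotonicity of $p_1,p_2$ and so converts the right intervals $[c_i,b]$ of Theorem~\ref{1th} into left intervals $[a,c_i]$, replacing each measure $b-c_i$ by $c_i-a$; I must track this sign change carefully through the expansion so that the cross terms $(b-ma)(ma-a)\bigl(\cdot\bigr)^{1/s}$ and the constant $(ma-a)^2$ appear with the correct signs. I would also verify that the arguments $\tfrac{\beta-m2^{1-s}f(a)}{f(b)-mf(a)}$ and $\tfrac{\beta-m2^{1-s}g(a)}{g(b)-mg(a)}$ stay nonnegative on the relevant range of $\beta$, so that the $1/s$-th roots are well defined and the level intervals remain inside $[a,b]$.
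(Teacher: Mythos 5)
Your proposal follows the paper's own proof essentially step for step: the same majorants $p_{1},p_{2}$ from \eqref{lq15}--\eqref{lq16}, the same factorized distribution function as in \eqref{lq17} with the superlevel sets correctly flipped to left intervals $[a,c_i]$ of measure $(b-ma)\bigl(\tfrac{\beta-m2^{1-s}f(a)}{f(b)-mf(a)}\bigr)^{1/s}+ma-a$ because $f(b)-mf(a)<0$ and $g(b)-mg(a)<0$, and the same conclusion via $F(\beta)=\beta$, Remark~\ref{z1} and Proposition~\ref{z2}. Your added remarks (tracking signs in the expansion and checking nonnegativity of the quantities under the $1/s$-th roots) are extra care beyond what the paper records, but do not change the argument.
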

 \begin{proof}
   Similar to the  Theorem \eqref{1th}, consider the functions
 \begin{align}\label{lq15}
      p_{1}(x)=& m2^{1-s}f(a)+\bigg(\frac{x-ma}{b-ma}\bigg)^s[f(b)-mf(a)],
 \end{align}
 \begin{equation}\label{lq16}
     p_{2}(x) =  m2^{1-s}g(a)+\bigg(\frac{x-ma}{b-ma}\bigg)^s[g(b)-mg(a)].
 \end{equation}
 Now consider the distribution function $F$ given as
\begin{align}\label{lq17}
  \nonumber F(\beta)=&\mu([a,b]\cap\{x|p_{1}(x)p_{2}(x)\geq\beta\})\\
  \nonumber =& \mu([a,b]\cap\{x|p_{1}(x)\geq\beta\}).\mu([a,b]\cap\{x|p_{2}(x)\geq\beta\})\\
  \nonumber=& \mu\bigg([a,b]\cap\bigg\{x|m2^{1-s}f(a)+\bigg(\frac{x-ma}{b-ma}\bigg)^s[f(b)-mf(a)]\geq\beta\bigg\}\bigg).\\
  \nonumber &\mu\bigg([a,b]\cap\bigg\{x|m2^{1-s}g(a)+\bigg(\frac{x-ma}{b-ma}\bigg)^s[g(b)-mg(a)]\geq\beta\bigg\}\bigg)\\
  \nonumber =& \mu\bigg([a,b]\cap\bigg\{x|x\leq (b-ma)\bigg(\frac{\beta-m2^{1-s}f(a)}{f(b)-mf(a)}\bigg)^{\frac{1}{s}}+ma\bigg\}\bigg).\\
  \nonumber &\mu\bigg([a,b]\cap\bigg\{x|x\leq (b-ma)\bigg(\frac{\beta-m2^{1-s}g(a)}{g(b)-mg(a)}\bigg)^{\frac{1}{s}}+ma\bigg\}\bigg).\\
  \nonumber=&\bigg( (b-ma)\bigg(\frac{\beta-m2^{1-s}f(a)}{f(b)-mf(a)}\bigg)^{\frac{1}{s}}+ma-a\bigg).\\
 &\qquad\quad \bigg((b-ma)\bigg(\frac{\beta-m2^{1-s}g(a)}{g(b)-mg(a)}\bigg)^{\frac{1}{s}}+ma-a\bigg).
\end{align}
Let $F(\beta)=\beta$ and solution of \eqref{lq17} is given by \eqref{lq11}. By Proposition \ref{z2} and Remark \ref{z1}, we have
\begin{equation*}
    (s)\int_{a}^{b} f g d\mu\leq min\{\beta, b-a\}.
  \end{equation*}
 \end{proof}
\begin{remark}
  Let $\mu$ be the Lebesgue measure on $\mathbb{R}$. Let $s=1,$ $m=1$ and $(s,m)\in(0,1]^2,$ $f,g:[a,b]\longrightarrow[0,\infty)$
  are convex functions such that $f(b)<f(a)$ and $g(b)<g(a).$ Then
  \begin{equation*}
    (s)\int_{a}^{b} f g d\mu\leq min\{\beta, b-a\},
  \end{equation*}
  where $\beta$ is given as
  \begin{equation}\label{lq20}
    (b-a)^2\bigg(\frac{\beta-f(a)}{f(b)-f(a)}\bigg).\bigg(\frac{\beta-g(a)}{g(b)-g(a)}\bigg)=\beta.
  \end{equation}
\end{remark}
\begin{remark}
  If $f(b)=mf(a)$ and $g(b)=mg(a)$, from \eqref{lq15} and \eqref{lq16}, we have  $p_{1}(x)=m2^{1-s}$ and $p_{2}(x)=m2^{1-s}$  and Proposition
  \ref{z2}, we have
  \begin{equation}\label{lq22}
    (s)\int_{a}^{b} fg d\mu \leq \{m^2 2^{2-2s}f(a)g(a), b-a\}.
  \end{equation}
  \end{remark}
  \begin{example}
  Consider the function $f(x)=x^{3/2}$ and $g(x)=x^{1/2}$ then $f(x)$, $g(x)$ are convex functions i.e. $f(x),g(x)\in K^{2}_{s,m}$, where $s=m=1$.
  Let $\mu$ be the Lebesgue measure on $x=[1,4]$. Thus $f(4)>mf(1)$ and $g(4)>mg(1)$. By Theorem \ref{1th}, we have
  \begin{equation}\label{ot5}
    2.4384= (s)\int_{1}^{4}x^2 d\mu \leq min\{2.5302, 4-1\}=2.5302.
  \end{equation}
\end{example}

\begin{example}
    Consider the function $f(x)=\frac{1}{x^2}$ and $g(x)=\frac{1}{x^2}$, then $f(x)$, $g(x)$ are convex functions i.e. $f(x),g(x)\in K^{2}_{s,m}$,
    where $s=m=1$. Let $\mu$ be the Lebesgue measure on $X=[1,2]$. Thus $f(2)<mf(1)$ and $g(2)<mg(1)$. By Theorem \ref{thm2}, we have
\begin{equation}\label{ot21}
      0.3247=(s)\int_{1}^{2} \frac{1}{x^4} d\mu \leq  min\{0.4802,2-1\}=0.4802.
\end{equation}
  \end{example}


\begin{thebibliography}{99}

\bibitem{h}  Caballero, J. and  Sadarangani, K.,  Chebyshev type inequality for Sugeno integrals, Fuzzy Sets Syst., 161(2010), 1480-1487.
\bibitem{j}  Agahi, H.,   Mesiar R. and    Ouyang, Y.,   General Minkowski type inequalities for Sugeno integral, Fuzzy Sets Syst., 161(2010), 708-715.
\bibitem{p}  Li, D.,   Cheng, Y.,   Wang, X.  and   Zang, S.,   Barnes-Godunova-Levin type inequalities of Sugeno integral for an
 $(\alpha,m)$-concave function. J. Inequ. Appl., (2015).
\bibitem{0x}   Roman-Flores, H.,   Flores-Franulic A. and    Chalco-Cano, Y.,    A Convolution type inequality for fuzzy integrals, Appl.
Math. Comput., 195(2008), 94-99.
\bibitem{hroman}  Roman-Flores, H.,   Flores-Franulic  A. and   Chalco-Cano, Y.,   A Jensen type inequality for fuzzy integrals  Inform. Sci., 177(2007), 3192-3201.
\bibitem{flores} Flores-Franulic, A. and   Roman-Flores, H.,  A Chebyshev type inequality for fuzzy integrals, Appl. Maths. Comp., 190(2007), 1178-1184.
 \bibitem{gill}  Gill, P.,   Pearce,  C. and  Peccaric, J.,  Hadamard's types inequality for $r$-convex functions, J. Math. Anl.Appl., 215(1997), 461-470.
\bibitem{dubo}  Dubois, D.,   Prade, H. and  Sabbadin, R.,   Qualitative decision theory with Sugeno integrals, Pro. of UAI, 98(1998), 121-128.
\bibitem{sari}  Sarikaya, M. and  Kiris, M., Some new inequalities of Hermite-Hadamard type for $s$-convex functions, Miskolc Math. Note 16(2015), 491-501.
\bibitem{ngoc}  Ngoc, N.,   Vinh, N. and    Hien, P.,  Integral inequalities of Hadamard type for $r$-convex functions. Int. Math. Forum., 4(2009), 1723-1728.
\bibitem{gzban}   Zabandan, G.,  Hermite-Hadamard type inequality for $r$-convex functions, J. Inequ. Appl., 215(2012), 1-8.
\bibitem{park}  Park, J.,  Some Hadamard's type inequalities for co-ordinated $(s,m)$-convex mapplings in the second sens, Far. East. J. Math. Sci.,15
    ( 2011), 205-216.
\bibitem{han}  Hanson, M.,  On sufficiency of the Kuhn-Tucker conditions, J. Math. Anal. Appl., 80(1981), 545-550.
\bibitem{isra}  Ben-Israel, A. and  Mond, B.,  What is invexity? J. Aust. Math. Soc. Ser. B, Appl. Math., 28(1986), 1-9.
\bibitem{lati}  Latif, M. and  Shoaib, M.,  Hermite-Hadamard type integral inequalities for differentiable $m$-preinvex and $(\alpha,m)$-preinvex
functions, J. Egypt. Math. Soc., 23(2015),236-241.
\bibitem{efte}  Eftekhari, N., Some remarks on $(s,m)$-convexity in second sense. J. Math. Ineq., 8(2014), 489-495.
\bibitem{viva}  Vivas, M.,  Fejer type inequalities for $(s,m)$-convex function in second sense, Appl. Math. Inf. Sci., 10(2016), 1689-1696.
\bibitem{dut}  Du, T.,  Li, Y. and   Yang, Z.,  A generalization of Simpson's inequality via differentible mapping using extened  $(s,m)$-convex functions,
    Appl. Math. Comput., 293(2015), 358-369.
\bibitem{liy}  Yang, Z.,  Li, Y. and   Du, T.,  A  generalization of Simpson type inequality  via differentiable function using $(s,m)$-convex functions,
    Ital. J. Pure Appl. Math., 35(2015), 327-338.
 \bibitem{haip}  Ren, H.,   Wang, G. and   Luo, L.,  Sandor type fuzzy inequality Based on the $(s,m)$-convex function in the second sense., Symmetry, 9(2017), 1-10.
 \bibitem{kir}  Kirmaci, U.,   Ozdemir, M. and   Pecaric, J.,  Hadamard type inequalities for $s$-convex function, Appl. Maths. Comp., 193(2007), 26-35.
 \bibitem{LI} Li, L., Hermite-Hadamard type fuzzy inequality based on $s$-convex function in the second sense, Mathematics Letters, 3(2017), 77-82.
 \bibitem{HH} Pachpatte, D. and Shinde, K., Hermite-Hadamard type inequality for $r_{1}$-convex function and $r{2}$-convex function using Sugeno integral, @FMI, 14(2017), 613-620.
 \bibitem{0}   Sugeno, M.,  Theory of fuzzy integrals and its applications (Ph.D Thesis), Tokyo Institute of Technology, (1974).
  \bibitem{2z}   Wang, Z. and    Klir,  G.,  Fuzzy Measures Theory, Plenum press, New York, (1992).
   \bibitem{1z}   Wang, Z. and    Klir, G.,  Generalized Measure Theory, Springer, New York, (2008).

\end{thebibliography}
\end{document}